\newtheorem{thm}{Theorem}
\newtheorem{lem}{Lemma}
\newtheorem{corollary}{Corollary}
\newtheorem{conjecture}{Conjecture}
\theoremstyle{definition}
\theoremstyle{remark}
\newtheorem{remark}{Remark}
\newcommand{\numknownsol}{38 }
\newcommand{\Z}{\mathbb{Z}}
\newcounter{twistcounter}
\newcounter{listlength}   % Define counter for list length
\newcommand{\twist}[4]{
	\ifnum #1>0
	\foreach \i in {1,...,#1} {
		\draw[white, double=black, ultra thick, double distance = 1] 
		(#3+#2*#3, {#4*(\i-1)/#1}) to[out=90, in=-90, looseness=0.5] (#2*#3, {#4*(\i)/#1});
		\draw[white, double=black, ultra thick, double distance = 1] 
		(#2*#3, {#4*(\i-1)/#1}) to[out=90, in=-90, looseness=0.5] (#3+#2*#3, {#4*(\i)/#1});
	}
	\else
	\foreach \i in {1,...,-#1} {
		\draw[white, double=black, ultra thick, double distance = 1] 
		(#2*#3, {-#4*(\i-1)/#1}) to[out=90, in=-90, looseness=0.5] (#3+#2*#3, -{#4*(\i)/#1});
		\draw[white, double=black, ultra thick, double distance = 1] 
		(#3+#2*#3, {-#4*(\i-1)/#1}) to[out=90, in=-90, looseness=0.5] (#2*#3, -{#4*(\i)/#1});
	}
	\fi
}
\newcommand{\pretzelknot}[2]{
	\begin{tikzpicture}
		\setcounter{twistcounter}{0}
		\setcounter{listlength}{0} 
		\foreach \x in {#1} { \addtocounter{listlength}{1} }
		\foreach \x [count=\nn] in {#1} {
			\pgfmathparse{mod({2*\nn}, {2*\thelistlength})} 
			\let\nextnn\pgfmathresult
			\pgfmathparse{ifthenelse(abs(2*\nn+1 - \nextnn) > 1.1, 0.75, 2)}
			\let\loosenessvalue\pgfmathresult
			\pgfmathsetmacro{\scaledX}{4/(2*\the\numexpr\value{listlength}\relax-1)} 
			\pgfmathsetmacro{\xshift}{2*\nn - 2}
			\pgfmathsetmacro{\ysize}{2.5}
			
			\twist{\x}{\xshift}{\scaledX}{\ysize}
			\draw[line width = 1] 
			({(2*\nn-1)*\scaledX}, \ysize) 
			to[out=90, in=90, looseness=\loosenessvalue] 
			({\nextnn*\scaledX}, \ysize);
			
			\draw[line width = 1] 
			({(2*\nn-1)*\scaledX}, 0) 
			to[out=-90, in=-90, looseness=\loosenessvalue] 
			({\nextnn*\scaledX}, 0);
			\addtocounter{twistcounter}{1}
		}
		% Add caption below the drawing (centered)
		\node[anchor=center] at (2,-1.3) {#2};
	\end{tikzpicture}
}
\newcommand{\pretzellink}[3]{
	\begin{tikzpicture} 
		\setcounter{twistcounter}{0}
		\setcounter{listlength}{0} 
		\foreach \x in {#1} { \addtocounter{listlength}{1} }
		\foreach \x [count=\nn] in {#1} {
			\pgfmathparse{mod({2*\nn}, {2*\thelistlength})} 
			\let\nextnn\pgfmathresult
			\pgfmathparse{ifthenelse(abs(2*\nn+1 - \nextnn) > 1.1, 0.75, 2)}
			\let\loosenessvalue\pgfmathresult
			\pgfmathsetmacro{\scaledX}{4/(2*\the\numexpr\value{listlength}\relax-1)} 
			\pgfmathsetmacro{\xshift}{2*\nn - 2}
			\pgfmathsetmacro{\ysize}{2.5}
			
			\pgfmathtruncatemacro{\arrowdir}{ifthenelse((#3 < 0), 0, (ifthenelse((mod(\nn,2) == 1), 2, 1)))}
			\ifnum\arrowdir=1
			\def\toporientation{->} % Store left arrow
			\def\botorientation{<-}
			\else \ifnum\arrowdir=2
			\def\toporientation{<-} % Store left arrow
			\def\botorientation{->}
			\else
			\def\toporientation{->} % Store right arrow
			\def\botorientation{->}
			\fi
			\fi
			
			\twist{\x}{\xshift}{\scaledX}{\ysize}
			\draw[line width = 1, \toporientation] 
			({(2*\nn-1)*\scaledX}, \ysize) 
			to[out=90, in=90, looseness=\loosenessvalue] 
			({\nextnn*\scaledX}, \ysize);
			
			\draw[line width = 1, \botorientation] 
			({(2*\nn-1)*\scaledX}, 0) 
			to[out=-90, in=-90, looseness=\loosenessvalue] 
			({\nextnn*\scaledX}, 0);
			\addtocounter{twistcounter}{1}
		}
		% Add caption below the drawing (centered)
		\node[anchor=center] at (2,-1.3) {#2};
	\end{tikzpicture}
}
\title{Explicit Formulas for the Alexander Polynomial of Pretzel Knots}
\author{Yury Belousov}
\address{Yury Belousov\\
Leonhard Euler International Mathematical Institute in Saint Petersburg}
\email{bus99@yandex.ru}
\thanks{The work is supported by the Ministry of Science and Higher Education of the Russian Federation (agreement no. 075-15-2025-343).}
\begin{document}
	\begin{abstract}
		We provide explicit formulas for the Alexander polynomial of pretzel knots and establish several immediate corollaries, including the characterization of pretzel knots with a trivial Alexander polynomial. As an application, we construct a new family of knots that are topologically slice but not smoothly slice.
	\end{abstract}
	\maketitle

    \section*{Introduction}
	A \emph{pretzel link} \( P(q_1, q_2, \dots, q_n) \) is a link obtained by connecting \( n \) twisted bands, where each band has \( q_i \) half-twists. The sign of \( q_i \) determines the direction of the \(i\)-th twist region. A pretzel link is a knot if and only if either precisely one \( q_i \) is even, or all \( q_i \) are odd and \( n \) is odd (see Fig.~\ref{fig: pretzel knots} for examples). 
	
	Pretzel links were introduced by Reidemeister in~\cite{Reidemeister1932} and have been thoroughly studied since then. The Alexander polynomial~\cite{Alexander1928} of these knots has also been studied before --- see, for example, \cite{Nakagawa1987}, \cite{Hironaka2001}, \cite{Lecuona2015}, \cite{MironovMorozovSleptsov2015}, \cite{BaeLee2020} --- and explicit formulas for some special cases were found. Kim and Lee~\cite{KimLee2007} also found explicit formulas for the Conway polynomial for several classes of pretzel links with at least one even parameter.\footnote{The case where all parameters are odd is also discussed in the paper, although the authors do not provide an explicit formula.} 
	
	Nevertheless, general formulas for the Alexander polynomial of pretzel knots have not appeared in the literature. In this note, we fill this gap (see Theorem~\ref{thm: main formula}).

	\begin{figure}[h]
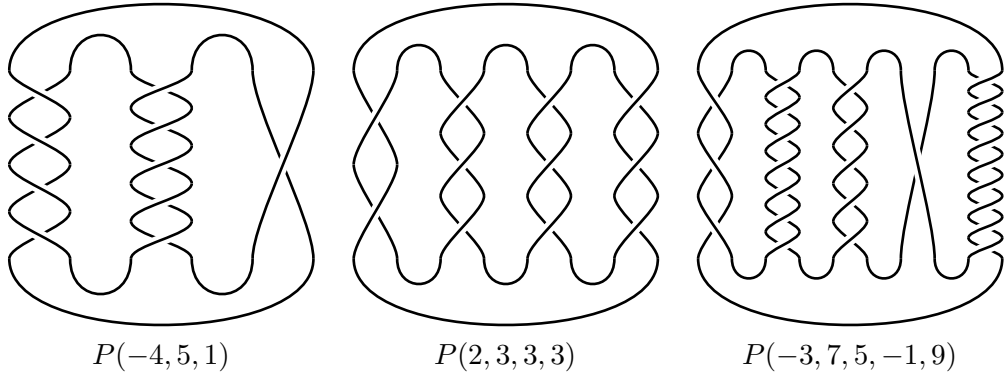

		\centering 
		\pretzelknot{-4,5, 1}{\(P(-4,5,1)\)}
		\pretzelknot{2,3,3,3}{\(P(2,3,3,3)\)}
		\pretzelknot{-3,7,5, -1, 9}{\(P(-3,7,5, -1, 9)\)}
		\caption{Examples of pretzel knots}
		\label{fig: pretzel knots}
	\end{figure}

    \subsection*{Conventions and notation} 
	\begin{itemize}
		\item If \(K = P(q_1,q_2, \dots, q_n)\) is a pretzel knot with exactly one even \( q_i \), we may assume, without loss of generality, that \(q_1\) is even. 
		\item We denote the Alexander polynomial of a link \(L\) by \(\Delta_L(t)\).
		\item We write \(f(t)\doteq g(t)\) to indicate that the Laurent polynomials \(f(t)\) and \(g(t)\) are equal up to multiplication by \(\pm t^k\) for some \(k \in \Z\).
		\item We denote by \(\sigma_k(q_1, q_2, \dots, q_n)\) the elementary symmetric polynomial of degree \( k \), that is, the sum of all products of \( k \) distinct variables among \( q_1, q_2, \dots, q_n \). For \( k < 0 \) we set \(\sigma_k(q_1, q_2, \dots, q_n) = 0\).
	\end{itemize}

	\begin{thm} \label{thm: main formula}
		Let \(K=P(q_1, q_2, \dots, q_n)\) be a pretzel knot. Then the Alexander polynomial \(\Delta_K(t)\) is given by the following cases:
		\begin{enumerate}[label=(\arabic*), ref=(\arabic*)]
			\item \label{case 1 thm main} If \( n \) is odd, \( q_1 \) is even, and \( q_2, \dots, q_n \) are odd,
			\[
			\Delta_K(t) \doteq \left(\prod_{i=2}^n \frac{1+t^{q_i}}{1+t} \right) 
			\left(1 + \frac{q_1}{2} (t^{-1} - t) \left(\sum_{i=2}^n\frac{t^{q_i}}{1+t^{q_i}}  - \frac{n-1}{2}\right) \right).
			\]
			\item \label{case 2 thm main} If \( n \) is even, \( q_1 \) is even, and \( q_2, \dots, q_n \) are odd,
			\[
			\Delta_K(t) \doteq \left(\prod_{i=2}^n \frac{1+t^{q_i}}{1+t} \right) 
			\left(t^{q_1/2} + \left(t^{q_1/2} - t^{-q_1/2}\right) \left(\sum_{i=2}^n\frac{t^{q_i}}{1+t^{q_i}}  - \frac{n}{2}\right) \right).
			\]        
			\item \label{case 3 thm main} If both \( n \) and all \( q_i \) are odd,
			\[
			\Delta_K(t) \doteq \frac{1}{2^{n-1}}\sum_{k=0}^{(n-1)/2} (t-1)^{2k}(t+1)^{n-1-2k} \sigma_{2k}(q_1, q_2, \dots, q_n).
			\]             
		\end{enumerate}
	\end{thm}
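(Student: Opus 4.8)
The plan is to argue by induction on the number of bands $n$, using the oriented skein relation for the Conway polynomial $\nabla_L(z)$ --- recall $\Delta_L(t)\doteq\nabla_L(t^{1/2}-t^{-1/2})$ --- rather than for $\Delta$ directly, so that summing a skein triple is not spoiled by the $\doteq$-ambiguity. It is natural to prove at the same time the analogous formulas for \emph{all} pretzel links $P(q_1,\dots,q_n)$, not only the knots. The base cases are small: $P(q_1)$ is the unknot, $P(q_1,q_2)=T(2,q_1+q_2)$, and the pretzel link on no bands is a $2$-component unlink; their Conway polynomials are classical.

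For the inductive step I would fix a twist region $i$ and resolve one of its crossings. Changing that crossing cancels a pair of half-twists, so the terms $L_+$ and $L_-$ are $P(\dots,q_i,\dots)$ and $P(\dots,q_i-2,\dots)$. The oriented smoothing $L_0$ depends on how the two strands of band $i$ are oriented relative to one another: when they are antiparallel, the smoothing caps the band off, so $L_0=P(\dots,\widehat{q_i},\dots)$ is a pretzel link on $n-1$ bands, available from the inductive hypothesis; when they are parallel, the smoothing merely erases a crossing, so $L_0=P(\dots,q_i-1,\dots)$, and pushing $q_i$ down to $0$ one reaches $P(q_1,\dots,q_{i-1},0,q_{i+1},\dots,q_n)$, which is the connected sum $\#_{j\ne i}T(2,q_j)$ with $\nabla=\prod_{j\ne i}\nabla_{T(2,q_j)}$. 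Which bands are antiparallel and which are parallel is settled by a short computation with the parities once an orientation is chosen; in case~\ref{case 3 thm main} every band is antiparallel, which makes that case the cleanest.

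In case~\ref{case 3 thm main} the link $P(\dots,\widehat{q_i},\dots)$ does not involve $q_i$, so $\nabla_{P(q_1,\dots,q_n)}$ has constant first difference in each $q_i$ and hence is affine in each $q_i$ separately; being also symmetric under permuting the bands, it is a $z$-polynomial combination of the elementary symmetric polynomials $\sigma_0,\dots,\sigma_n$. The relation $\nabla_{P(q_1,\dots,q_n)}-\nabla_{P(q_1-2,q_2,\dots,q_n)}=\pm z\,\nabla_{P(q_2,\dots,q_n)}$ then expresses those coefficients through the $(n-1)$-band case, and together with the base value $\nabla_{P(1,\dots,1)}=\nabla_{T(2,n)}$ it forces exactly the stated expression after one rewrites the right-hand side of~\ref{case 3 thm main} in the variable $z$ by $(t-1)^2=t(t^{1/2}-t^{-1/2})^2$ and $(t+1)^2=t(t^{1/2}+t^{-1/2})^2$. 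Cases~\ref{case 1 thm main} and~\ref{case 2 thm main} go the same way: the odd bands $q_2,\dots,q_n$ are peeled off one at a time, which reduces $\Delta_K$ to pretzel knots with fewer bands and assembles the prefactor $\prod_{i=2}^n\frac{1+t^{q_i}}{1+t}=\prod_{i=2}^n\Delta_{T(2,q_i)}$ (its value at $q_1=0$), while the even band $q_1$ --- deleted by its smoothing when $n$ is odd, but only shortened when $n$ is even --- produces the correction, linear in $\tfrac{q_1}{2}$ in case~\ref{case 1 thm main} and built from $t^{\pm q_1/2}$ in case~\ref{case 2 thm main}; checking that the displayed closed forms satisfy the resulting recursions is then a routine Laurent-polynomial identity with the $\sigma_k$ and with geometric sums such as $\tfrac{1+t^{q}}{1+t}$.

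I expect the real difficulty to lie not in any single step but in the orientation and sign bookkeeping: one must pin down, uniformly in the parameters, the parallel-versus-antiparallel pattern of the bands (it depends on more than the parity of the single $q_i$ being varied), track the sign in front of $z$ in each skein triple as the $q_i$ change sign, keep the $\doteq$-ambiguity under control, and order the induction so that the three knot cases and the auxiliary pretzel-link cases feed into one another in a well-founded way.
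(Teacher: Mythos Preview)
Your approach is essentially the paper's: both prove the theorem simultaneously with auxiliary formulas for the two-component pretzel links that arise as smoothings (the paper packages these as a separate Lemma), use the skein relation to induct on the $q_i$ with base cases the torus links $P(\pm1,\dots,\pm1)$ and the connected sum $P(0,q_2,\dots,q_n)$, and then verify that the displayed closed forms satisfy the resulting recursions via $\sigma_k(q_1+2,q_2,\dots,q_n)=\sigma_k(q_1,\dots,q_n)+2\sigma_{k-1}(q_2,\dots,q_n)$. Your use of the Conway polynomial to control the $\doteq$-ambiguity is exactly equivalent to the paper's use of the symmetrized Alexander polynomial, and your remark that affinity in each $q_i$ plus permutation symmetry forces a $z$-linear combination of the $\sigma_k$ is a pleasant conceptual gloss on what the paper verifies by direct computation.
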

	
\subsection*{Structure of the paper}
    In Section~\ref{sec: main proof}, we provide an elementary proof of the explicit formulas, relying on the skein relation. 
    In Section~\ref{sec: corollaries}, we derive some straightforward corollaries, including the characterization of pretzel knots with a trivial Alexander polynomial and a formula for the knot determinant. 
    %In Section~\ref{sec: experiments}, we describe the experimental path that led us to the formulas.
\subsection*{Acknowledgments}
    The author would like to thank Vadim Stepaniuk for many helpful discussions.

\section{Proof of Theorem~\ref{thm: main formula}} \label{sec: main proof}
	Recall that the \emph{symmetrized Alexander polynomial} of a link \( L \) is a well-defined normalization of the Alexander polynomial\footnote{This symmetric normalization is due to Conway~\cite{Conway1970}. Accordingly, the invariant is also referred to as the \emph{Alexander--Conway polynomial}.}, given as a Laurent polynomial in \( t^{1/2} \) and characterized by the following two properties:
	(1) \(\Delta_L(t) = \Delta_L(t^{-1})\), and  
	(2) if \( L \) is a knot, then \(\Delta_L(1) = 1\).  
	The symmetrized Alexander polynomial satisfies the skein relation:
	\[
	\Delta_{L_+}(t) - \Delta_{L_-}(t) = (t^{1/2} - t^{-1/2})\Delta_{L_0}(t),
	\]
	where links \(L_+\), \(L_-\), and \(L_0\) differ by the local operation shown in Fig.~\ref{fig: skein relation}.
	
	\begin{figure}[h]
		\centering
		\begin{tikzpicture}[scale=0.9]
			% Overcrossing (L+)
			\begin{scope}[shift={(-3,0)}, xscale = 0.5, yscale = 0.75]
				\draw[line width = 2, ->] (1,-1)  to (-1, 1);
				\draw[line width = 2, white, double=black] (-1,-1)  to (1, 1);
				\draw[line width = 2, ->] (-1,-1)  to (1, 1);
				\node at (0.0,-1.3) {\(L_+\)};
			\end{scope}
			
			% Undercrossing (L-)
			\begin{scope}[shift={(0,0)}, xscale = 0.5, yscale = 0.75]
				\draw[line width = 2, ->] (-1,-1)  to (1, 1);
				\draw[line width = 2, white, double=black] (1,-1)  to (-1, 1);
				\draw[line width = 2, ->] (1,-1)  to (-1, 1);
				
				\node at (0.0,-1.3) {\(L_-\)};
			\end{scope}
			
			% Smoothing (L0)
			\begin{scope}[shift={(3,0)}, xscale = 0.5, yscale = 0.75]
				\draw[line width = 2, ->] (1,-1)  to [looseness = 1, out = 120, in = -120] (1, 1);
				\draw[line width = 2, ->] (-1,-1)  to [looseness = 1, out = 60, in = -60] (-1, 1);
				\node at (0.0,-1.3) {\(L_0\)};
			\end{scope}
		\end{tikzpicture}
		\caption{Illustration of the skein relation}    
		\label{fig: skein relation}    
	\end{figure}
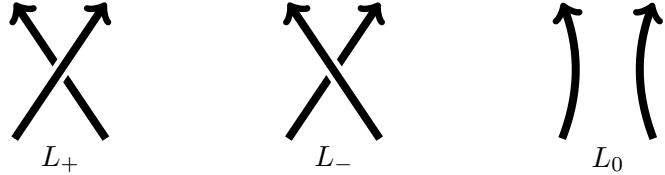
	
	To apply the skein relation, we also need explicit formulas for a special case of two-component pretzel links. The following lemma will be proved simultaneously with Theorem~\ref{thm: main formula}.  
	
	\begin{lem} \label{lem: two component pretzel}
		Let \(L=P(q_1, q_2, \dots, q_n)\) be an oriented two-component pretzel link, where \( n \) is even and all \( q_i \) are odd. Then the Alexander polynomial \(\Delta_L(t)\) is given by the following cases:
		\begin{enumerate}[label=(\arabic*), ref=(\arabic*)]
			\item \label{case 1 lemma} If the strands in each twist of \(L\) are co-directed (as in Fig.~\ref{fig: pretzel links}(A)), then
			\[
			\Delta_L(t) \doteq \frac{1}{(1+t)^{n-1}}\prod_{i=1}^n\left( 1+t^{q_i}\right) 
			\left(\sum_{i=1}^n\frac{t^{q_i}}{1+t^{q_i}} - \frac{n}{2}\right).
			\]
			\item \label{case 2 lemma} If the strands in each twist of \(L\) are oppositely directed (as in Fig.~\ref{fig: pretzel links}(B)), then
			\[
			\Delta_L(t) \doteq  \frac{1}{2^{n-1}}\sum_{k=1}^{n/2} (t-1)^{2k-1}(t+1)^{n-2k} \sigma_{2k-1}(q_1, q_2, \dots, q_n).
			\]
		\end{enumerate}
	\end{lem}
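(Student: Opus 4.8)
The plan is to run a simultaneous induction on $|q_1|+\dots+|q_n|$ together with the three cases of Theorem~\ref{thm: main formula}, using the skein relation from Figure~\ref{fig: skein relation} applied at a crossing inside the first twist region. The point is that changing a crossing inside the $i$-th band replaces $q_i$ by $q_i\mp 2$, while smoothing that crossing either reduces the number of half-twists in that band by one (decreasing $n$ effectively, since a band with $0$ half-twists can be removed, possibly splitting or fusing components) or turns a knot into a two-component link and vice versa, exactly the phenomena that connect the four formulas to one another. So I would first set up carefully the bookkeeping: fix the orientation conventions so that, in Lemma~\ref{lem: two component pretzel}\ref{case 1 lemma} (co-directed strands), smoothing a crossing in band~$i$ kills the band and leaves a pretzel knot on the remaining $n-1$ bands covered by Theorem~\ref{thm: main formula}\ref{case 1 thm main} or \ref{case 2 thm main}, whereas in case~\ref{case 2 lemma} (oppositely directed strands) the $L_0$ at a crossing in band~$i$ instead merges the two components and is handled by case~\ref{case 3 thm main}.

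Concretely, for each case I would pick a crossing in band~$1$, write $L_+$ and $L_-$ as the two pretzel diagrams with $q_1$ and $q_1-2$ (say $q_1>0$; the case $q_1<0$ is symmetric, and the base cases $q_1\in\{0,\pm1,\pm 2\}$ reduce to small torus links, connected sums of Hopf links, and two-bridge links, whose Alexander polynomials are classical), and identify $L_0$ as the relevant lower-complexity pretzel link. The skein relation then gives a first-order recurrence in $q_1$:
\[
\Delta_{P(q_1,q_2,\dots,q_n)}(t) - \Delta_{P(q_1-2,q_2,\dots,q_n)}(t) = \pm t^{k}\,(t^{1/2}-t^{-1/2})\,\Delta_{L_0}(t),
\]
with $\Delta_{L_0}$ already known by the induction hypothesis (it is a pretzel link in fewer bands or with one fewer crossing). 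Summing this telescoping recurrence from the base value of $q_1$ up to the desired value, and plugging in the inductive formulas for $\Delta_{L_0}$, should reproduce exactly the claimed closed forms; here the key algebraic identities are that $\frac{1+t^{q}}{1+t}$ and $\frac{t^{q}}{1+t^{q}}$ transform predictably under $q\mapsto q-2$, and that $\sigma_{k}$ on $n$ variables relates to $\sigma_{k}$ and $\sigma_{k-1}$ on $n-1$ variables — these are what make the right-hand sides of the four formulas assemble consistently.

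The main obstacle, and where most of the care is needed, is the orientation/normalization bookkeeping rather than any deep idea: one must track the power $t^{k}$ and the sign in the skein relation (the $\doteq$ hides these, but the induction does not, so I would carry symmetrized polynomials with honest signs and only reduce mod $\pm t^k$ at the end), verify that the smoothing $L_0$ really is the pretzel link claimed (in particular that removing a trivial band does not accidentally disconnect the diagram in an unintended way, and that in the two-component cases the induced orientations on $L_0$ match those assumed in the corresponding formula), and separately nail down enough base cases — small $|q_i|$, small $n$, and the degenerate situations where some $q_i=\pm1$ — so that the induction has somewhere to start. Once the combinatorics of which formula feeds which is laid out in a table, each of the telescoping sums is a routine but slightly lengthy manipulation of rational functions in $t$ and elementary symmetric polynomials, which I would organize as a sequence of lemmas on those two families of identities so the four inductive steps become short.
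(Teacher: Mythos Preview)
Your overall plan coincides with the paper's: a simultaneous skein induction intertwining the two cases of the Lemma with the three cases of Theorem~\ref{thm: main formula}, anchored on torus links (all $|q_i|=1$) for Cases~\ref{case 2 lemma} and~\ref{case 3 thm main}, and on $P(0,q_2,\dots,q_n)$ (a connected sum of $(2,q_i)$-torus knots) for Cases~\ref{case 1 thm main} and~\ref{case 2 thm main}.

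There is, however, one concrete topological slip in the bookkeeping you set up for Case~\ref{case 1 lemma}. When the two strands in a twist region are \emph{co-directed} (locally parallel), the oriented $L_0$-smoothing of a single crossing does \emph{not} kill the band: it produces two vertical arcs, so the effect is simply $q_i\mapsto q_i\mp 1$. Hence $L_0$ is the pretzel $P(q_1,\dots,q_i\mp 1,\dots,q_n)$ on the \emph{same} $n$ bands, now with one even parameter and $n$ even --- this is exactly Case~\ref{case 2 thm main} of the Theorem, not a knot on $n-1$ bands. It is only in the oppositely-directed situation that the oriented smoothing yields horizontal arcs which, after Reidemeister~I moves on the leftover twists, wipe out the whole band and drop you to $n-1$ parameters; you have that half right for Case~\ref{case 2 lemma}. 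If you followed your stated identification literally, the telescoping sum would be fed the wrong $\Delta_{L_0}$ and would not reproduce the product $\prod_i(1+t^{q_i})$ in the claimed formula. Once you correct this one point, the recursions telescope exactly as you anticipate and the argument is the paper's.
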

	\begin{figure}[h]
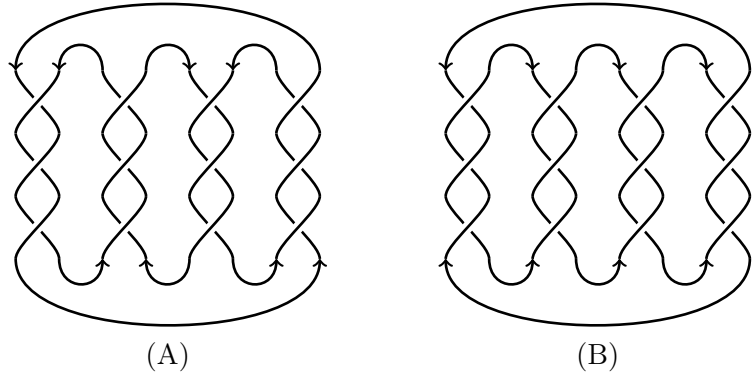

		\centering
		\pretzellink{3,3,3,3}{(A)}{1}
		\hspace{1cm}
		\pretzellink{3,3,3,3}{(B)}{-1}    
		\caption{Example of two-component pretzel link \(P(3,3,3,3)\) with different orientations}
		\label{fig: pretzel links}
	\end{figure}
	%We will prove in detail only cases (3) of Theorem~\ref{thm: main formula} and (2) of Lemma~\ref{lem: two component pretzel} --- the strategy for proving the other cases is identical.
	
	\subsection{Case~\ref{case 3 thm main} of Theorem~\ref{thm: main formula} and Case~\ref{case 2 lemma} of Lemma~\ref{lem: two component pretzel}}
	Let \(L = P(q_1, q_2,\dots, q_n)\) be a pretzel link (or a knot) where all \(q_i\) are odd. We will use the following notation:
	\begin{align} \label{eq: alex poly odd case}
		f_o(t;q_1,\dots, q_n) &= \frac{t^{-\frac{n-1}{2}}}{2^{n-1}}\sum_{k=0}^{(n-1)/2} (t-1)^{2k}(t+1)^{n-1-2k}\sigma_{2k}(q_1, \dots, q_n), \\
		f_e(t;q_1, \dots, q_n) &=\frac{t^{-\frac{n-1}{2}}}{2^{n-1}}\sum_{k=1}^{n/2} (t-1)^{2k-1}(t+1)^{n-2k}\sigma_{2k-1}(q_1, \dots, q_n).
	\end{align}	
	First, observe that
	\begin{align*}
		&f_o(t; 1,-1,q_1, \dots, q_n) = \frac{t^{-\frac{n+1}{2}}}{2^{n+1}}\sum_{k=0}^{(n+1)/2} (t-1)^{2k}(t+1)^{n+1-2k}\underbrace{\sigma_{2k}(1,-1,q_1, \dots, q_n)}_{= \sigma_{2k}(q_1,\dots, q_n) - \sigma_{2k-2}(q_1,\dots, q_n)} = \\
		&=\frac{t^{-\frac{n+1}{2}}}{2^{n+1}}\sum_{k=0}^{(n-1)/2} \underbrace{\left((t-1)^{2k}(t+1)^{n+1-2k} - (t-1)^{2k+2}(t+1)^{n-1-2k} \right)}_{4t(t-1)^{2k}(t+1)^{n-1-2k}}\sigma_{2k}(q_1, \dots, q_n) =\\
		&=f_o(t; q_1, q_2, \dots, q_n).
	\end{align*}
	A similar argument holds for \(f_e(t;q_1,\dots, q_n)\). 
	
	If all \(|q_i| = 1\) and \(m = |\sum_{i=1}^n q_i|\), then \(L\) is the \((2, m)\)-torus link (or knot), and  
	\begin{align*}        
		f_o(t; q_1, \dots, q_n)  &=  \frac{t^{-\frac{m-1}{2}}}{2^{m-1}}\sum_{k=0}^{(m-1)/2} (t-1)^{2k}(t+1)^{m-1-2k}\underbrace{\sigma_{2k}(1, \dots, 1)}_{=\binom{m}{2k}} = \\&= t^{-\frac{m-1}{2}}\sum_{k=0}^{m-1} (-t)^k = t^{-\frac{m-1}{2}}\frac{t^m+1}{t+1}.    
	\end{align*}
	By an analogous calculation
	\begin{align*}        
		f_e(t; q_1, \dots, q_n) = t^{-\frac{m-1}{2}}\frac{t^m-1}{t+1}.   
	\end{align*}   
	These are well-known formulas, confirming our induction base. 
	
	Now, assume that the induction hypothesis holds for \({K_1 = P(q_1, q_2, \dots, q_n)}\) and for \({K_0 = P(q_2, \dots, q_n)}\). We can apply the skein relation to compute \(\Delta_{K}(t)\) for \({K = P(q_1\pm2, q_2, \dots, q_n)}\). We consider the case \(q_1+2\); the argument for \(q_1-2\) is analogous. Applying the induction hypothesis, we obtain the following for odd \(n\):
	\begin{align*}
		\Delta_{P(q_1+2, q_2, \dots, q_n)}(t) = f_o(t; q_1, q_2, \dots,q_n) + (t^{1/2} - t^{-1/2})f_e(t; q_2, \dots, q_n).
	\end{align*}
	Similarly, for even \(n\) we have: 
	\begin{align*}
		\Delta_{P(q_1+2, q_2, \dots, q_n)}(t) = f_e(t; q_1, q_2, \dots,q_n) + (t^{1/2} - t^{-1/2})f_o(t; q_2, \dots, q_n).
	\end{align*}
	Since 
	\[
	\sigma_k(q_1+2, q_2, \dots, q_n) = \sigma_k(q_1,q_2,\dots,q_n) + 2\sigma_{k-1}(q_2, \dots, q_n), 
	\]
	this confirms that \( f_o \) and \( f_e \) satisfy the desired recursion, completing the proof.

	\subsection{Cases~\ref{case 1 thm main} and \ref{case 2 thm main} of Theorem~\ref{thm: main formula} and Case~\ref{case 1 lemma} of Lemma~\ref{lem: two component pretzel}}
	
	The proofs of these cases share a similar approach, so we present only the key steps, omitting direct computations. 
	
	The base case of the induction for Case~\ref{case 1 lemma} of Lemma~\ref{lem: two component pretzel} corresponds to the link \( P(\pm1, \pm1, \dots, \pm1) \), which is, once again, a two-component \( (2, m) \)-torus link, though with a different orientation. The base case of the induction for Cases~\ref{case 1 thm main} and \ref{case 2 thm main} of Theorem~\ref{thm: main formula} is \( P(0, q_2, \dots, q_n) \), which is a connected sum of \((2, q_i)\)-torus knots for each \( q_i \), regardless of the parity of \( n \). Recall that the Alexander polynomial of a connected sum is the product of the Alexander polynomials of its summands.
	
	Finally, we use the skein relation to confirm the validity of the given formulas.

\begin{remark}
    The explicit formulas were originally conjectured based on symbolic experiments. We first noticed that for a two-component link with odd parameters and co-directed strands (Case~\ref{case 1 lemma} in Lemma~\ref{lem: two component pretzel}), the Alexander polynomial has the form
    \[
        \frac{1}{(1+t)^{n-1}}\sum_{S\subseteq\{1,\dots,n\}}\left(\frac{n}{2}-|S|\right)t^{\left(\sum\limits_{i\in S} q_i\right)}.
    \]
    We then rewrote this expression into the form presented in Lemma~\ref{lem: two component pretzel} and applied the skein relation to derive the formulas for Cases~\ref{case 1 thm main} and \ref{case 2 thm main} in Theorem~\ref{thm: main formula}. For Case~\ref{case 3 thm main} in Theorem~\ref{thm: main formula}, we observed that the coefficients in the Alexander polynomial consistently corresponded to elementary symmetric polynomials. These observations guided the formulation of the theorem.\footnote{We also validated our conjectures by comparison with known data from KnotInfo~\cite{knotinfo}.}
\end{remark}

	\section{Corollaries}\label{sec: corollaries}
	\subsection{Determinant of pretzel knots}	
	The determinant of a pretzel knot follows directly from Theorem~\ref{thm: main formula}. This result reproduces the formula obtained by Kolay in \cite{Kolay2023}.
	
	\begin{corollary} \label{cor: det}
		The determinant of the pretzel knot \( K = P(q_1, q_2, \dots, q_n) \) is given by:
		\[
		\det\left(K\right) = |\sigma_{n-1}(q_1, q_2, \dots, q_n)| = \left|q_1q_2\dots q_n\left(\frac{1}{q_1}+\frac{1}{q_2}+\dots+\frac{1}{q_n}\right)\right|.
		\]
	\end{corollary}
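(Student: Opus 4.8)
The plan is to evaluate the symmetrized Alexander polynomial at $t=-1$, since $\det(K) = |\Delta_K(-1)|$. I would handle the three cases of Theorem~\ref{thm: main formula} separately, but in each case the computation reduces to substituting $t=-1$ into the explicit formula and simplifying. The key observation is that the $\doteq$ ambiguity (multiplication by $\pm t^k$) does not affect the absolute value at $t=-1$, so working with the formulas as stated is legitimate.

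For Case~\ref{case 3 thm main} (all $q_i$ and $n$ odd), substituting $t=-1$ kills every term in the sum $\sum_k (t-1)^{2k}(t+1)^{n-1-2k}\sigma_{2k}$ except the one where the exponent of $(t+1)$ vanishes, i.e. $n-1-2k=0$, giving $k=(n-1)/2$. That surviving term is $\frac{1}{2^{n-1}}(-2)^{n-1}\sigma_{n-1}(q_1,\dots,q_n)$, whose absolute value is exactly $|\sigma_{n-1}|$. For Cases~\ref{case 1 thm main} and~\ref{case 2 thm main}, I would first note that $\prod_{i=2}^n \frac{1+t^{q_i}}{1+t}$ at $t=-1$ is an indeterminate $0/0$ for each odd $q_i$, so I would instead write $\frac{1+t^{q_i}}{1+t} = 1 - t + t^2 - \dots + t^{q_i-1}$ (valid since $q_i$ is odd), which evaluates to $q_i$ at $t=-1$; thus the product contributes $\prod_{i=2}^n |q_i|$. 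In Case~\ref{case 1 thm main} the second factor involves $\frac{t^{q_i}}{1+t^{q_i}}$, which is singular at $t=-1$, so I would combine it with the overall product: rewrite the whole expression over a common denominator and track which terms survive the substitution. Concretely, $\left(\prod \frac{1+t^{q_i}}{1+t}\right)\cdot\frac{q_1}{2}(t^{-1}-t)\cdot\frac{t^{q_j}}{1+t^{q_j}}$ becomes $\frac{q_1}{2}(t^{-1}-t)\,t^{q_j}\prod_{i\neq j}\frac{1+t^{q_i}}{1+t}\cdot\frac{1}{1+t}$; since $t^{-1}-t = -(1+t)(1-t)/t$ wait — more cleanly, $t^{-1}-t$ vanishes to first order at $t=-1$ only if... in fact $t^{-1}-t$ at $t=-1$ equals $-1-(-1)=0$? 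No: $(-1)^{-1}-(-1) = -1+1 = 0$. Good, so $(t^{-1}-t)$ has a simple zero at $t=-1$ that cancels the simple pole of $\frac1{1+t}$, and I would compute the resulting finite value, obtaining $\pm\frac{q_1}{2}\cdot(\text{something})\cdot\prod_{i\neq j}q_i$ summed over $j$, which should assemble into $\big|q_1(\sum_{i\geq 2}\frac{1}{q_i})\prod_{i\geq 2}q_i + \prod_{i\geq 2}q_i\big|$-type expression matching $\sigma_{n-1}$; I would do the analogous bookkeeping in Case~\ref{case 2 thm main} with $t^{q_1/2}-t^{-q_1/2}$ playing the role of the vanishing factor.

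Finally I would verify the claimed identity $|\sigma_{n-1}(q_1,\dots,q_n)| = |q_1\cdots q_n(\tfrac1{q_1}+\cdots+\tfrac1{q_n})|$, which is immediate since $\sigma_{n-1} = \sum_{j=1}^n \prod_{i\neq j} q_i = (q_1\cdots q_n)\sum_{j=1}^n \tfrac1{q_j}$ (this manipulation is formal and remains valid even if some $q_i$ equals zero, reading the right-hand side as the left-hand side). The main obstacle is the careful handling of the removable singularities at $t=-1$ in Cases~\ref{case 1 thm main} and~\ref{case 2 thm main}: one must either pass to the polynomial form $1-t+\dots+t^{q_i-1}$ throughout before substituting, or take a limit $t\to -1$ with l'Hôpital-style care, and then check that the combinatorial sum that emerges is precisely $\sigma_{n-1}$. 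Once the expressions are in honest polynomial form this is routine, so I would front-load that rewriting step.
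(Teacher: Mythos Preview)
Your plan is correct and matches the paper's approach: both start from $\det(K)=|\Delta_K(-1)|$, dispatch Case~\ref{case 3 thm main} by direct substitution (only the $k=(n-1)/2$ term survives), and handle Cases~\ref{case 1 thm main} and~\ref{case 2 thm main} by carefully resolving the removable singularities at $t=-1$. The paper does the latter via a Taylor expansion $t=-1+\varepsilon$, while you propose the algebraically equivalent route of factoring $t^{-1}-t=\frac{(1-t)(1+t)}{t}$ to cancel the offending $(1+t)$ in the denominator; either way one arrives at $\prod_{i\ge 2}q_i + q_1\sum_{j\ge 2}\prod_{i\ne j,\,i\ge 2}q_i=\sigma_{n-1}$, exactly as in the paper.
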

	
	\begin{proof}
		Recall that \(\det(K) = |\Delta_K(-1)|\). Thus, to calculate the determinant of a pretzel knot, we need to consider the three cases in Theorem~\ref{thm: main formula}.  
		
		In Case~\ref{case 3 thm main}, the result follows by direct substitution. Cases~\ref{case 1 thm main} and \ref{case 2 thm main} involve routine calculus computations. We perform the computation for Case~\ref{case 1 thm main}; the argument for Case~\ref{case 2 thm main} is completely analogous.  
		
		The formula for Case~\ref{case 1 thm main} in Theorem~\ref{thm: main formula} consists of two factors. For the first one, we have  
		\[
		\lim_{t\to -1}\prod_{i=2}^n \frac{1+t^{q_i}}{1+t} = \prod_{i=2}^n q_i.
		\]
		For the second factor, using the expansion at \( t = -1 \), we obtain the following limits:
		\begin{align*}    
			&\lim_{t\to -1}\left(1 + \frac{q_1}{2}\left(t^{-1} - t\right)\left(\sum_{i=2}^n\frac{t^{q_i}}{1+t^{q_i}}  - \frac{n-1}{2}\right) \right) 
			= \\
			&=1+\frac{q_1}{2}\lim_{\varepsilon \to 0}\left(\underbrace{\left(\frac{1}{-1+\varepsilon} +1 - \varepsilon \right)}_{=-2\varepsilon + O(\varepsilon^2)}\left(\sum_{i=2}^n\underbrace{\left(\frac{(-1+\varepsilon)^{q_i}}{1+(-1+\varepsilon)^{q_i}}  - \frac{1}{2}\right)}_{=\frac{1}{2} - \frac{1}{q_i \varepsilon} + O(\varepsilon)} \right) \right) =\\
			&= 1 + q_1\left(\sum_{i=2}^n\frac{1}{q_i} \right) = q_1\left( \sum_{i=1}^n\frac{1}{q_i}\right).
		\end{align*}
		Combining these results, we obtain \( \sigma_{n-1}(q_1,q_2, \dots, q_n) \), as stated.
	\end{proof}

	\subsection{Degree of the Alexander polynomial of pretzel knots}	
	For pretzel knots with at least one even parameter, the genus was completely determined in~\cite{KimLee2007} using the formulas for the Conway polynomial. The case where all parameters are odd is more subtle (except for alternating knots; see Corollary~\ref{cor: genus}). In this case, the degree of the Alexander polynomial can take any even value from \( 0 \) to \( n-1 \), and its dependence on the parameters is not straightforward. 
	
	In this section, we examine two cases: when the Alexander polynomial of such knots has degree \( n-1 \) (the maximal possible) and when it is trivial. Before analyzing the general case, we first examine the special case of alternating pretzel knots, where the degree of the Alexander polynomial directly determines the genus.

	\begin{corollary} \label{cor: genus}
		Let \( K = P(q_1, q_2, \dots, q_n) \) be a pretzel knot with all \( q_i \) odd and of the same sign. Then its genus satisfies \( g(K) = \frac{n-1}{2} \).
	\end{corollary}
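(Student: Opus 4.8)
The plan is to combine the explicit formula from Case~\ref{case 3 thm main} of Theorem~\ref{thm: main formula} --- which applies here, since a pretzel knot with all \(q_i\) odd necessarily has \(n\) odd, so \(\tfrac{n-1}{2}\in\Z\) --- with the classical genus theory of alternating knots. Recall the inequality \(\operatorname{breadth}\Delta_K \le 2g(K)\), valid for every knot, and the theorem of Crowell and Murasugi that equality holds when \(K\) is alternating. The standard pretzel diagram of \(P(q_1,\dots,q_n)\) with all \(q_i\) of the same sign is alternating, so \(K\) is an alternating knot; hence it suffices to show \(\operatorname{breadth}\Delta_K = n-1\).

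To do this, note first that, since all \(q_i\) share the same sign, each \(\sigma_{2k}(q_1,\dots,q_n)\) --- being a sum of products of an even number of the \(q_i\) --- is positive. Write \(p(t) = \sum_{k=0}^{(n-1)/2}(t-1)^{2k}(t+1)^{n-1-2k}\,\sigma_{2k}(q_1,\dots,q_n)\), so that \(\Delta_K(t) \doteq 2^{1-n}p(t)\) by Case~\ref{case 3 thm main}. Each summand is a polynomial of degree exactly \(n-1\) with leading coefficient \(1\), and each takes the value \(\sigma_{2k}\) at \(t=0\); therefore the coefficient of \(t^{n-1}\) in \(p\) and the constant term \(p(0)\) both equal \(\sum_k\sigma_{2k}(q_1,\dots,q_n) > 0\). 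Consequently \(p\) has degree \(n-1\) and nonzero constant term, so \(\operatorname{breadth}\Delta_K = \operatorname{breadth}p = n-1\), whence \(g(K) = \tfrac12\operatorname{breadth}\Delta_K = \tfrac{n-1}{2}\).

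The only step that does real work is the non-vanishing of the two extreme coefficients of \(p\), and this is exactly where the same-sign hypothesis is indispensable: for \(q_i\) of mixed signs the sums \(\sum_k\sigma_{2k}\) governing these coefficients may vanish, which is precisely the drop in the degree of the Alexander polynomial discussed in the surrounding text. Everything else is citation. If one wishes to avoid invoking the Crowell--Murasugi theorem for the upper bound \(g(K)\le\tfrac{n-1}{2}\), an alternative is to use the orientable ``pretzel surface'' spanned by \(K\) --- two disks joined by \(n\) half-twisted bands, orientable precisely because all \(q_i\) have the same parity --- which has Euler characteristic \(2-n\) and connected boundary, hence genus \(\tfrac{n-1}{2}\); the matching lower bound then still follows from \(\operatorname{breadth}\Delta_K = n-1\) together with \(\operatorname{breadth}\Delta_K\le 2g(K)\).
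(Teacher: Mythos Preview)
Your proof is correct and follows essentially the same route as the paper's: invoke the Crowell--Murasugi theorem to equate \(2g(K)\) with the breadth of \(\Delta_K\) for the alternating knot, then use the positivity of every \(\sigma_{2k}\) (from the same-sign hypothesis) to show that the breadth of the polynomial in Case~\ref{case 3 thm main} is exactly \(n-1\). Your explicit computation of the leading and constant coefficients of \(p(t)\) as \(\sum_k\sigma_{2k}\) makes transparent the step the paper states in one line, and your closing remark about the pretzel Seifert surface is a pleasant optional alternative for the upper bound.
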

	
	\begin{proof}
		If all \(q_i\) have the same sign:
		\begin{enumerate}
			\item The knot is alternating, so \( g(K) = \frac{\deg \Delta_K(t)}{2} \). This follows from a classical result of Murasugi~\cite{Murasugi1958} and Crowell~\cite{Crowell1959}.
			\item Since \( \sigma_{2k}(q_1, q_2, \dots, q_n) \) is positive for all \( 0 \leq k \leq \frac{n-1}{2} \), it follows that \( \deg \Delta_K(t) = n-1 \). \qedhere
		\end{enumerate}
	\end{proof}

	\begin{corollary}
		\label{cor: trivial polynomial}
		Let \(K = P(q_1, q_2, \dots, q_n)\) be a pretzel knot where all \(q_i\) are odd. Then the following hold:
		\begin{enumerate}[label=(\arabic*), ref=(\arabic*)]
			\item The Alexander polynomial satisfies \( \deg \Delta_K(t) = n - 1 \) if and only if 
			\[
			\sum_{k=0}^{(n-1)/2} \sigma_{2k}(q_1, q_2, \dots, q_n) \neq 0.
			\]
			\item \label{case trivial Alex} The Alexander polynomial is trivial, i.e., \( \Delta_K(t) \doteq 1 \), if and only if
            \begin{equation}\label{eq: system trivial alex}
                \sigma_{2k}(q_1, q_2, \dots, q_n) = (-1)^{k} \binom{\frac{n-1}{2}}{k}, \qquad  0 \leq k \leq \frac{n-1}{2}.
            \end{equation}
		\end{enumerate}
	\end{corollary}

	\begin{proof}
		To simplify the notation, let us denote \( a_k = \sigma_k(q_1, q_2, \dots, q_n) \).
		
		Consider the Alexander polynomial in the form
		\begin{equation*}
			\Delta_K(t) = \frac{1}{2^{n-1}} \sum_{k=0}^{(n-1)/2} (t-1)^{2k}(t+1)^{n-1-2k} a_{2k}.
		\end{equation*}
		
		The polynomials \((t-1)^{2k}(t+1)^{n-1-2k}\) are palindromic; consequently, the coefficient of \( a_{2i} t^j \) in the expansion equals the coefficient of \( a_{2i} t^{n-1-j} \). Thus, \( \deg \Delta_K(t) = n - 1 \) if the coefficient of \( t^0 \) is nonzero. 
		Since \[\Delta_K(0) = \frac{1}{2^{n-1}} \sum_{k=0}^{(n-1)/2} a_{2k}, \] this completes the proof of the first case.
		
		The condition \( \Delta_K(t) \doteq 1 \) holds if and only if the only nonzero coefficient of \(\Delta_K(t)\) appears at \( t^{(n-1)/2} \) and equals 1. This requirement can be expressed as a system of linear equations:
		\begin{equation}
			\label{eq: system of linear equations}
			A \left(a_{0}, a_{2}, \ldots, a_{\frac{n-1}{2}}\right)^\top = (0, 0, \ldots, 0, 1)^\top,
		\end{equation}
		where \( A \in \mathbb{R}^{\frac{n+1}{2} \times \frac{n+1}{2}} \) is a matrix whose element \( A_{i, j} \) is equal to the coefficient of \( a_{2i - 2}t^{j-1} \) in \( \Delta_K(t) \).  
		
		The matrix \( A \) is non-degenerate. Indeed, if we consider \( u = t-1 \), then
		\[
		(t-1)^{2k}(t+1)^{n-1-2k} = u^{2k}(u+2)^{n-1-2k},
		\]
		and such polynomials are clearly linearly independent. Thus, there exists a unique solution to the system~\eqref{eq: system of linear equations}.  		
		Setting
		\(
		a_{2k} = (-1)^{k} \binom{\frac{n-1}{2}}{k},
		\)
		we compute:
		\begin{align*}
			\frac{1}{2^{n-1}} &\sum_{k=0}^{(n-1)/2} (-1)^{k} \binom{\frac{n-1}{2}}{k} (t-1)^{2k}(t+1)^{n-1-2k}=\\ 
			&= \frac{1}{2^{n-1}} \left((t+1)^2 - (t-1)^2 \right)^{\frac{n-1}{2}} 
			= t^{\frac{n-1}{2}}.\qedhere
		\end{align*}
    \end{proof}

    \subsection{Topologically slice pretzel knots}
    The Alexander polynomial of a non-trivial pretzel knot with an even parameter is never trivial. Hence, Case~\ref{case trivial Alex} of Corollary~\ref{cor: trivial polynomial} provides a complete characterization of all pretzel knots with a trivial Alexander polynomial. According to Freedman's theorem~\cite{Freedman1982}, such knots are topologically slice. Note that a complete classification of topologically slice odd pretzel knots with three twist blocks was provided by Miller~\cite{Miller2017}.

    One of the equations in system~\eqref{eq: system trivial alex} is \(\sigma_{n-1}(q_1,\dots,q_n) = (-1)^{\frac{n-1}{2}}\), which corresponds to an equation for the knot determinant (see Corollary~\ref{cor: det}). For \( n=3 \), this is the only condition, and it reproduces the well-known equation
    \[
    q_1q_2 + q_1q_3 + q_2q_3 = -1,
    \]
    which characterizes pretzel knots with three twist blocks having a trivial Alexander polynomial. In this case, there are infinitely many known solutions; for example, for any odd integer \( p \), the triple \( (-p, 2p-1, 2p+1) \) satisfies the equation. 
    Finding solutions for \(n > 3\) is significantly more difficult\footnote{Note that it is comparatively easy to find an infinite series of pretzel knots with trivial determinant for an arbitrary number of twists, but trivializing the entire polynomial is much more restrictive.}.
    We first observe that if a tuple of odd integers \( (q_1, \dots, q_n) \) is a solution of the system~\eqref{eq: system trivial alex}, then the extended tuple \( (\pm 1, \mp 1, q_1, \dots, q_n) \) yields a solution in \( n+2 \) variables. This is obvious from a geometric point of view --- it is nothing but an application of the second Reidemeister move.     
    In fact, if \((q_1,\dots,q_n)\) is a solution of the system~\eqref{eq: system trivial alex} and \(q_i = \pm 1\) for some \(i\), this forces \(q_j = \mp 1\) for some other \(j\). Indeed, let \(P(t) := \prod_{j=1}^{n} (t - q_j)\). From the equations, one checks that \(P(t) = E(t^2) + t (t^2 - 1)^n\) for some polynomial \(E(t)\), and hence \(P(1) = P(-1)\). Thus, if \(q_i = \pm1\) for some \(i\), then \(P(\pm1)=0\), which implies \(P(\mp1)=0\); consequently, there exists some \(j\) such that \(q_j = \mp1\). 
    Therefore, it is natural to investigate \emph{irreducible} solutions, which we define as the solutions of system~\eqref{eq: system trivial alex} in which \(|q_i|>1\) for all \(i\).

    We performed a computational search for such solutions. For \(n=5\), we searched through all tuples where the three smallest absolute values of the parameters do not exceed \(100\,001\), and we identified \numknownsol irreducible solutions (up to sign and permutation of the parameters)\footnote{The calculation was done with a {C++} program; the code of the program and the results of the calculations are available at~\cite{Bcode}.}. These solutions correspond to non-trivial knots due to the classification of Montesinos knots~\cite{Zieschang1984} (alternatively, the span of their Jones polynomial is non-zero --- see \cite[Theorem 9, Case 2]{DiazManchon2023}). From the work~\cite{Bryant2017} it also follows that all corresponding knots are not smoothly slice. Thus, we get a new series of topologically slice, but not smoothly slice, knots. The solutions are listed in Appendix~\ref{appendix}.

    For \(n=7\), we searched through all tuples where the four smallest absolute values of the parameters do not exceed \(5\,001\) and found no solutions. Based on these computations, we propose the following conjecture:

    \begin{conjecture}
       There are infinitely many irreducible solutions of the system~\eqref{eq: system trivial alex} for \(n=5\), while there are no irreducible solutions for \(n>5\).
    \end{conjecture}
	\bibliography{biblio}
	\bibliographystyle{alpha}

    \newpage
    \appendix 
    \section{Irreducible solutions of the system~\eqref{eq: system trivial alex} for \(n=5\)}\label{appendix}
    Since the equations in system~\eqref{eq: system trivial alex} are invariant under permutation of the variables and under multiplication of all variables by \(-1\), and since we are interested in irreducible solutions only, we present the solutions in the normalized form:
    \[
        1 < q_1 < |q_2| < |q_3| <|q_4| <|q_5|.    
    \]
    Note that we use strict inequalities here, since one of the equations in system~\eqref{eq: system trivial alex} is \(\sigma_4(q_1,\dots,q_5) = 1\), which forces \(q_1,\dots,q_5\) to be pairwise coprime. \medskip

    The list of  known irreducible solutions (\numknownsol in total):
    {\small
\[
\renewcommand{\arraystretch}{1} % Optional: adds a little vertical breathing room
\begin{array}{rrrrr}
13, & -15, & -17, & 29, & 71 \\
89, & -109, & -111, & 271, & 307 \\
169, & -181, & -239, & 307, & 1\,871 \\
229, & -239, & -305, & 341, & 6\,119 \\
231, & -251, & -289, & 349, & 4\,001 \\
265, & -287, & -307, & 351, & 8\,399 \\
265, & -287, & -417, & 911, & 989 \\
415, & -417, & -1\,519, & 2\,393, & 4\,369 \\
573, & -575, & -2\,377, & 4\,159, & 5\,741 \\
951, & -1\,121, & -1\,189, & 1\,937, & 6\,049 \\
985, & -1\,189, & -1\,231, & 2\,379, & 4\,591 \\
1\,119, & -1\,219, & -1\,343, & 1\,585, & 24\,767 \\
1\,189, & -1\,231, & -2\,377, & 4\,591, & 5\,741 \\
1\,583, & -1\,807, & -1\,891, & 2\,485, & 20\,791 \\
2\,241, & -2\,243, & -11\,969, & 14\,279, & 76\,229 \\
2\,815, & -2\,899, & -4\,351, & 4\,929, & 60\,031 \\
3\,345, & -3\,347, & -23\,561, & 33\,461, & 80\,783 \\
4\,159, & -4\,929, & -5\,279, & 9\,569, & 21\,113 \\
4\,181, & -4\,673, & -5\,985, & 11\,969, & 17\,137 \\
4\,351, & -4\,929, & -5\,851, & 9\,569, & 25\,345 \\
4\,369, & -4\,645, & -6\,449, & 8\,371, & 45\,449 \\
5\,509, & -5\,851, & -8\,399, & 11\,311, & 49\,895 \\
8\,161, & -8\,373, & -17\,065, & 25\,439, & 61\,777 \\
8\,849, & -8\,969, & -23\,599, & 41\,079, & 60\,535 \\
9\,793, & -11\,329, & -12\,979, & 24\,309, & 45\,319 \\
10\,319, & -11\,439, & -15\,049, & 29\,413, & 43\,549 \\
11\,285, & -11\,593, & -24\,243, & 43\,471, & 62\,929 \\
11\,323, & -13\,111, & -15\,199, & 34\,271, & 40\,699 \\
12\,431, & -12\,655, & -25\,991, & 32\,339, & 163\,171 \\
13\,201, & -14\,169, & -20\,929, & 35\,099, & 70\,849 \\
20\,501, & -20\,747, & -38\,457, & 41\,999, & 619\,345 \\
23\,561, & -24\,551, & -26\,599, & 28\,271, & 1\,953\,199 \\
26\,351, & -30\,499, & -34\,039, & 55\,999, & 157\,249 \\
29\,303, & -32\,339, & -43\,471, & 91\,909, & 112\,111 \\
34\,453, & -40\,019, & -43\,791, & 69\,551, & 226\,199 \\
43\,777, & -44\,839, & -60\,535, & 64\,767, & 1\,857\,439 \\
76\,517, & -87\,891, & -97\,009, & 144\,535, & 588\,817 \\
76\,735, & -83\,711, & -96\,139, & 117\,503, & 1\,241\,953
\end{array}
\]
}
\end{document}